\documentclass[12pt]{amsart}
\usepackage[utf8]{inputenc}
\usepackage[margin=1in]{geometry}
\usepackage{amsfonts}
\usepackage{amsmath}
\usepackage{amssymb}
\usepackage{amsthm}
\usepackage{xcolor}
\usepackage{bm}
\usepackage{mathtools}
\usepackage{enumitem}
\usepackage{verbatim}
\usepackage{pinlabel}
\usepackage{caption}
\usepackage{subcaption}
\usepackage{tikz}
\usepackage{graphicx}
\usepackage{hyperref}
\usepackage{tikz-cd}
\usepackage{float}
\usepackage{geometry}
\usepackage{soul}
\usepackage{mathtools}

\newtheorem{theorem}{Theorem}[section]

\newtheorem{lemma}[theorem]{Lemma}
\newtheorem{proposition}[theorem]{Proposition}

\newtheorem{fact}[theorem]{Fact}

\newcommand{\nl}{\mathrm{(NL)}}
\newcommand{\Z}{\mathbb{Z}}

\usepackage{tcolorbox} 
\newlist{todolist}{itemize}{2}
\setlist[todolist]{label=$\square$}
\usepackage{pifont}

\title{Characterizing Property $\nl$ in Coxeter Groups} 
\author{Sahana Balasubramanya, Rachel Niebler, and Roberta Shapiro}

\date{}

\begin{document}

\begin{abstract}
    A group has Property $\nl$ -- which stands for ``no loxodromics" -- if no element of the group acts loxodromically on any hyperbolic space. In this brief note, we provide a complete characterization of which Coxeter groups have Property $\nl$. 
\end{abstract}
\maketitle

\vspace{-1.2cm}

\section{Introduction}

A group $G$ has Property $\nl$, which stands for \emph{no loxodromics}, if no element of $G$ acts loxodromically for any action (by isometries) on any hyperbolic space. Property $\nl$ was first formally introduced by the first author, Fournier-Facio, Genevois, and Sisto in \cite{PropNL}. 

In this paper, which is an extension of the REU project by the authors and Burkhalter \cite{BBNS}, our goal is to characterize exactly which Coxeter groups have Property (NL). Coxeter groups appear naturally as groups of symmetries of certain spaces and are defined in terms of an underlying graph (see Section~\ref{sec:background} for details). 

In \cite{BBNS}, the authors characterized Property $\nl$ in Right-Angled Coxeter groups (RACGs) and triangle groups (Coxeter groups with three generators). In this paper, we provide a complete classification of the property among \emph{all} finitely generated Coxeter groups.

\begin{theorem}\label{thm:main}
    A finitely generated Coxeter group has Property $\nl$ if and only if it is the direct product of spherical (finite) or affine Coxeter groups other than $D_\infty.$ 
\end{theorem}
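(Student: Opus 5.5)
The plan is to prove the two directions separately, using the standard decomposition of a Coxeter system into irreducible components: $W = W_1 \times \cdots \times W_k$, where each $W_i$ is irreducible. Each irreducible $W_i$ is spherical, affine, or non-spherical non-affine.

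\emph{Sufficiency.} We first show that spherical and affine groups other than $D_\infty$ have Property $\nl$, and then that $\nl$ passes to finite direct products.

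For a finite group every element has finite order, so it is elliptic in any action and cannot be loxodromic.

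An irreducible affine group $W$ of rank at least $3$ is virtually $\Z^n$ with $n \ge 2$. More precisely, $W = \Z^n \rtimes W_0$ with $W_0$ finite, and $W_0$ acts irreducibly on $\mathbb{R}^n$. The key steps are as follows.
\begin{enumerate}
\item If $g$ is loxodromic, then a power $g^m$ lies in the translation lattice $\Z^n$ and is still loxodromic.
\item For a loxodromic element $h$, the elements commuting with $h$ (indeed, its centralizer) preserve the pair of fixed points $\{h^{\pm}\}$ on the Gromov boundary. Hence the action of the centralizer $\Z^n$ factors through a homomorphism, its Busemann quasi-character $\beta\colon \Z^n \to \mathbb{R}$, with $\beta(h) \neq 0$.
\item Conjugating by $W_0$ and averaging, we compare $\beta$ with $\beta \circ w$ for $w \in W_0$. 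Each conjugate $w g^m w^{-1}$ is also loxodromic. The limit set of the whole group must then be just $\{h^{\pm}\}$, because the finite-index normal subgroup $\Z^n$ fixes this pair setwise. So $W$ fixes $\{h^\pm\}$ and acts on it with an index-$\le 2$ kernel $K$.
\item On $K$ the Busemann function gives a homomorphism $\beta\colon K \to \mathbb{R}$ that is nonzero on $g^{2m}$. Its restriction to $\Z^n$ spans a $W_0$-invariant line or a sign-twisted line in $\mathrm{Hom}(\Z^n,\mathbb{R})$, i.e.\ a one-dimensional subrepresentation of $W_0$ on $\mathbb{R}^n$ up to a character.
\item This contradicts irreducibility once $n \ge 2$.
\end{enumerate}
For $D_\infty = \widetilde A_1$ the argument fails, as expected, since $D_\infty$ acts on $\mathbb{R}$ with the product of the two generators loxodromic.

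For products, suppose $g = (g_1,\dots,g_k)$ is loxodromic in an action of $W_1 \times \cdots \times W_k$. Then $g$ fixes a boundary pair that each commuting factor preserves. The Busemann homomorphism argument above shows that some factor contains a loxodromic power of $g_i$. Alternatively, we can use that $\nl$ is closed under finite products and finite extensions, which we may cite from \cite{PropNL}. The transfer to finite-index overgroups is exactly what the argument in step 3 does.

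\emph{Necessity.} If some factor is $D_\infty$, then projecting onto it gives an action of $W$ on $\mathbb{R}$ with a loxodromic element. If some irreducible factor $W_i$ is neither spherical nor affine, we need a loxodromic action of $W_i$, which we then pull back to $W$ via the projection. Here we use the known dichotomy that such a $W_i$ is acylindrically hyperbolic, or at least contains a rank-one element acting on the Davis complex. This follows from work of Caprace--Fujiwara: an irreducible, non-spherical, non-affine Coxeter group acting on its Davis complex contains a rank-one isometry. By Bestvina--Fujiwara/Sisto, this yields a loxodromic element for an action on a hyperbolic space. In the rank-$3$ (triangle) and right-angled cases, this recovers \cite{BBNS}.

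\emph{Main obstacle.} The main obstacle is the affine sufficiency step, in particular handling arbitrary, possibly non-proper, hyperbolic spaces. The plan is to rely on the classification of group actions on hyperbolic spaces: each action is elliptic, horocyclic, lineal, focal, or general type. Groups with virtually abelian, and hence amenable, structure admit no general-type actions. For the remaining lineal and focal types, a loxodromic element forces a nonzero Busemann homomorphism. That homomorphism must then be killed by the irreducibility of the finite Weyl group representation $W_0 \curvearrowright \mathbb{R}^n$.
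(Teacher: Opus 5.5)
Your proposal is correct. For the $D_\infty$ factor and for the indefinite-type factors it follows the paper: the action of $D_\infty$ on a line, and rank-one elements in the Davis complex via Caprace--Fujiwara, then Sisto, each pulled back along the projection. Where you genuinely differ is the affine case. The paper never uses the splitting $\Z^n\rtimes W_0$. It argues that irreducible affine groups are virtually abelian, hence amenable, and just infinite (M\"oller--Varghese), so they surject onto $\Z$ never and onto $D_\infty$ only if they are $D_\infty$. It then applies the criterion of \cite[Corollary 1.5]{PropNL}: a finitely generated amenable group has $\nl$ iff it surjects onto neither $\Z$ nor $D_\infty$. You instead prove the needed special case by hand:
\begin{itemize}
\item A loxodromic $g$ yields a loxodromic $h=g^m$ in the translation lattice.
\item The Busemann quasi-character at $h^+$ restricts to a nonzero homomorphism $\beta$ on the abelian group $\Z^n$.
\item Normality of $\Z^n$ forces $W$ to preserve $\{h^\pm\}$, so $\beta\circ c_w=\pm\beta$ and $\mathbb{R}\beta$ is a $W_0$-invariant line, which is impossible for $n\ge 2$.
\end{itemize}
The paper's route is shorter but rests on two substantial black boxes. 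Yours needs only the Busemann quasi-character and the irreducibility of the Weyl group representation, which is essentially why these groups are just infinite anyway. It also makes transparent why $\widetilde A_1=D_\infty$ is the unique exception. Your commuting-components argument is likewise a self-contained replacement for the citation of \cite[Proposition 4.1]{PropNL}: $\beta(g)=\sum_i\beta(g_i)$ on $\langle g_1,\dots,g_k\rangle$, so some $g_i$ is itself loxodromic (not merely a power of it).

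Three points to tighten when writing it up:
\begin{itemize}
\item In step 3, the cleanest justification is that each $whw^{-1}$ lies in $\Z^n$. It is therefore a loxodromic element fixing $h^{\pm}$, and a loxodromic has exactly two fixed points at infinity.
\item The sign twist follows most easily from the fact that $|\beta|$ is the stable translation length, which is conjugation invariant.
\item The remark about ``transfer to finite-index overgroups'' plays no role, since $\Z^n$ itself fails $\nl$, and can be dropped.
\end{itemize}
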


We remark that the above theorem can likely be deduced by experts working with Coxeter groups with relative ease. However, to the best of our knowledge, this result has not been recorded in the literature. 

Moreover, it is a popular avenue of research to study which properties of a Coxeter group can be completely determined from their defining graph. Since the irreducible factors of a finitely generated Coxeter group can easily be determined from its defining graph, and the finite and irreducible affine groups have been completely classified, our theorem also implies that Property $\nl$ can be completely (and easily) determined from the defining graph of such a Coxeter group. \\

\textbf{Acknowledgments.} The authors would like to thank Anthony Genevois for suggesting this result and proof ideas. They would also like to thank the REU program held at the Georgia Institute of Technology in 2023 under the supervision of Dan Margalit, where the authors first met.

\section{Background}\label{sec:background}

A Coxeter group is defined via a \emph{Coxeter graph}, which is a graph $\Gamma$ that encodes a presentation for the group. The vertices of $\Gamma$ correspond to generators of the group of order 2. The edges of such a graph are labeled by elements of the set $\{3, 4, 5, \cdots \} \cup \{\infty\}$;  edges (or the lack thereof) correspond to relations in the group presentation in the following way.

\begin{itemize}[itemsep=5pt]
    \item If vertices $a$ and $b$ are not connected by an edge, then $(ab)^2=1$ (which implies $ab = ba$, so the corresponding generators commute). 
    \item If vertices $a$ and $b$ are connected by an edge labeled by an integer $m,$ then $(ab)^m=1$, but $(ab)^i\neq 1$ for all $0<i<m.$
    \item If generators $a$ and $b$ are connected by an edge labeled by $\infty,$ there is no added relation (also sometimes referred to as ``no relation"). 
\end{itemize}

Given such a graph $\Gamma,$ the associated Coxeter group is denoted $W_\Gamma$. A Coxeter group $G$ is finitely generated if and only if there is a finite Coxeter graph $\Gamma$ so that $G\cong W_\Gamma.$ In this paper, we will focus only on finitely generated Coxeter groups, so without loss of generality, all the graphs $\Gamma$ we consider are finite. 

From the description of the edge labels above, it follows that if $\Gamma$ is a disconnected graph, then $W_\Gamma$ can be expressed as a direct product of the Coxeter groups defined by the maximal connected components of $\Gamma$. A Coxeter group is \emph{irreducible} if it cannot be expressed as the direct product of more than one Coxeter group; equivalently, its defining Coxeter graph is connected. The following can then be easily deduced (see \cite[Section 6]{Humphreys} for instance). 

\begin{fact}\label{fact}
    Any finitely generated Coxeter group decomposes into a direct product of irreducible Coxeter groups. 
\end{fact}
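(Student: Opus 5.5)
The plan is to prove the Fact by induction on the number of vertices of the finite Coxeter graph $\Gamma$, using the observation recorded just above: if $\Gamma$ is disconnected, then $W_\Gamma$ is the direct product of the Coxeter groups defined by its connected components. Since $\Gamma$ is finite, it has finitely many connected components $\Gamma_1,\dots,\Gamma_k$. I would then show directly that
\[
W_\Gamma \cong W_{\Gamma_1}\times\cdots\times W_{\Gamma_k},
\]
and that each $W_{\Gamma_i}$ is irreducible because $\Gamma_i$ is connected.

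For the isomorphism, I would compare presentations. The presentation of $W_\Gamma$ consists of the generators of all the $\Gamma_i$, the relations internal to each $\Gamma_i$, and, for vertices $a\in\Gamma_i$ and $b\in\Gamma_j$ with $i\neq j$, the relation $(ab)^2=1$. This last relation says precisely that $a$ and $b$ commute, since both are involutions. These are exactly the generators and relations of the standard presentation of a direct product of groups given by presentations: take the disjoint union of the generators, keep the factor relations, and add commutation relations between generators of different factors. By von Dyck's theorem, the obvious maps in each direction are mutually inverse homomorphisms.

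For irreducibility of the factors, I would use the equivalence stated in the text: a Coxeter group is irreducible exactly when its defining graph is connected. Since each $\Gamma_i$ is connected, each $W_{\Gamma_i}$ is irreducible. If one wants to avoid that equivalence, the argument can be phrased purely as an induction. If $\Gamma$ is connected, there is nothing to prove. Otherwise, split $\Gamma$ into a component and the rest, and apply the induction hypothesis to the smaller graph.

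I do not expect any real obstacle. The only point needing care is the identification of ``irreducible'' with ``connected graph''. The direction ``connected implies not a nontrivial direct product'' is the nontrivial one; I would cite \cite[Section 6]{Humphreys} for it, as the paper does, rather than reprove it.
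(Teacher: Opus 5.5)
Your route is the paper's: split $\Gamma$ into connected components $\Gamma_1,\dots,\Gamma_k$, identify $W_\Gamma$ with $W_{\Gamma_1}\times\cdots\times W_{\Gamma_k}$ by comparing presentations, and call each factor irreducible because its graph is connected. The presentation comparison is fine. The problem is the step you single out as the nontrivial one and plan to cite from \cite[Section 6]{Humphreys}. The implication ``$\Gamma$ connected $\Rightarrow$ $W_\Gamma$ is not a direct product of two nontrivial Coxeter groups'' is \emph{false} for abstract groups. For example, $I_2(6)$ is a single edge labeled $6$, but $W(I_2(6))$ is the dihedral group of order $12$, which is isomorphic to $S_3\times\Z/2=W(A_2)\times W(A_1)$. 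Likewise $W(B_3)\cong S_4\times\Z/2=W(A_3)\times W(A_1)$. Humphreys does not prove such an implication; he \emph{defines} a Coxeter system to be irreducible when its graph is connected. Your fallback induction does not avoid the issue, since ``if $\Gamma$ is connected, there is nothing to prove'' is the same implication. The paper's sentence ``equivalently, its defining Coxeter graph is connected'' has the same defect. For \emph{infinite} Coxeter groups the implication does hold, but that is a nontrivial theorem (Paris; Nuida), not a formal consequence of the definitions.

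So everything depends on the definition you adopt. Suppose ``irreducible'' means ``connected Coxeter graph'', which is the notion the rest of the paper actually uses, since the spherical/affine/indefinite trichotomy concerns connected graphs. Then your proof is complete, it coincides with the paper's, and there is no nontrivial direction to cite. Suppose instead ``irreducible'' means not isomorphic to a product of two or more nontrivial Coxeter groups, as the paper's definition literally says. Then the component decomposition need not produce irreducible factors, and you need a different argument for existence. One that works: let $d(W)=\dim_{\mathbb{F}_2} W/[W,W]$.
\begin{enumerate}
\item For finitely generated $W$, $d(W)$ is finite, because the abelianization is an elementary abelian $2$-group generated by the images of the generators.
\item $d$ is additive under direct products.
\item $d(W)\ge 1$ whenever $W\neq 1$, because the sign character $s\mapsto -1$ is well defined (all relators have even length).
\end{enumerate}
Direct factors of a finitely generated group are finitely generated quotients of it. Hence if $W\cong W_1\times W_2$ with both $W_i$ nontrivial Coxeter groups, then $d(W_i)<d(W)$, and induction on $d$ gives a decomposition into abstractly irreducible Coxeter groups.
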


Irreducible Coxeter groups can in turn be classified by the spaces of which they are the symmetries. An irreducible \emph{spherical} Coxeter group is a Coxeter group that acts geometrically and irreducibly on a sphere in $\mathbb{R}^n$. An \emph{affine} irreducible Coxeter group is an infinite Coxeter group that has a representation as a discrete affine reflection group \cite{Humphreys}. Coxeter groups can be neither spherical, not affine, in which case they are referred to as \emph{indefinite type}.

\section{Proof of Theorem~\ref{thm:main}}

To prove our result, we will first reduce the theorem to the case of irreducible Coxeter groups (Lemma~\ref{lemma:irreducible}). We then show that irreducible Coxeter groups of indefinite type fail to have Property $\nl$ (Lemma~\ref{lemma:notaffinesphericalthenacylindricallyhyp}). We also show that spherical and irreducible affine Coxeter groups (other than $D_\infty$) have Property $\nl$ (Lemma~\ref{lemma:spherical} and Proposition~\ref{prop:affinenl}). These results will then be pieced together to prove the main theorem. 

We begin by reducing to the case of irreducible Coxeter groups. As explained in Section~\ref{sec:background} (see Fact~\ref{fact}), there is a canonical way to express a Coxeter group as a direct product of irreducible Coxeter groups. In this case, we have the following. 

\begin{lemma}\label{lemma:irreducible}
    A reducible Coxeter group $W_\Gamma$ has Property $\nl$ if and only if all its irreducible factors have Property $\nl$.
\end{lemma}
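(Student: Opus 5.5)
We will prove the two directions separately. Write $W_\Gamma = W_1 \times \cdots \times W_k$ for the decomposition of Fact~\ref{fact} into irreducible factors. The key input is the known fact that Property $\nl$ passes to quotients and is preserved by finite direct products, as established in \cite{PropNL}. Since this note is short, we plan to reprove both facts directly from the definitions, using only elementary facts about loxodromic isometries.

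\textbf{($\Rightarrow$)} Suppose $W_\Gamma$ has Property $\nl$ and some factor $W_i$ acts on a hyperbolic space $X$ with a loxodromic element $g$. Precompose with the projection $W_\Gamma \to W_i$, which is a homomorphism because the product is direct. This gives an isometric action of $W_\Gamma$ on $X$ in which the element $(1,\dots,g,\dots,1)$ acts exactly as $g$ does, hence loxodromically. This contradicts Property $\nl$ for $W_\Gamma$, so every factor has Property $\nl$. This direction is routine.

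\textbf{($\Leftarrow$)} Suppose each $W_i$ has Property $\nl$, and let $W_\Gamma$ act on a hyperbolic space $X$. By induction on $k$, it suffices to treat $G = A \times B$ where $A$ and $B$ both have Property $\nl$. Let $g = (a,b) = ab$ with $a \in A$ and $b \in B$ commuting. Since $A$ acting on $X$ by restriction has no loxodromics, $a$ is elliptic or parabolic, and similarly for $b$. The task is to show that a product of two commuting non-loxodromic isometries is non-loxodromic. We propose to use the stable translation length $\tau(h) = \lim_n d(x, h^n x)/n$. For commuting $a,b$ we have $(ab)^n = a^n b^n$, so
\[
d(x, a^n b^n x) \le d(x, a^n x) + d(a^n x, a^n b^n x) = d(x,a^n x) + d(x, b^n x).
\]
This gives $\tau(ab) \le \tau(a) + \tau(b)$. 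On a hyperbolic space, an isometry is loxodromic if and only if $\tau > 0$, while elliptic and parabolic isometries have $\tau = 0$. Hence $\tau(ab) = 0$, and $ab$ is not loxodromic. The same subadditivity gives the induction directly for $k$ factors, or we can apply the two-factor case repeatedly.

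\textbf{Main obstacle.} The potential subtlety is the claim that parabolic isometries have zero stable translation length while loxodromics have positive stable translation length, for general (possibly non-proper, non-geodesic) hyperbolic spaces. We will cite the standard classification of isometries of Gromov hyperbolic spaces, which holds in this generality. If the paper's definition of loxodromic is via the existence of a quasi-isometric embedding $n \mapsto g^n x$, this is literally equivalent to $\tau(g) > 0$, and no further input is needed.
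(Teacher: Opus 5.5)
Your proof is correct. The forward direction is the same as the paper's: the paper extends the action of the factor by letting the generators of $\Gamma\setminus\Gamma'$ act trivially, which is precisely your precomposition with the projection.

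For the converse you take a different route. The paper simply cites \cite[Proposition 4.1]{PropNL}, that Property $\nl$ is preserved under direct sums. You instead prove the finite case directly from the subadditivity $\tau(ab)\le\tau(a)+\tau(b)$ for commuting $a,b$.

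The ``main obstacle'' you flag is not really there. By Fekete's lemma, $\tau(g)=\inf_n d(x,g^nx)/n$. So $\tau(g)>0$ gives
\[
|m-n|\,\tau(g)\le d(g^mx,g^nx)\le |m-n|\,d(x,gx).
\]
Hence, with the standard definition of loxodromic (the orbit map $n\mapsto g^nx$ is a quasi-isometric embedding), an isometry fails to be loxodromic exactly when $\tau(g)=0$, in any metric space. You therefore need neither the elliptic/parabolic classification nor the hyperbolicity of $X$.

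The citation gives the paper brevity and a more general statement about direct sums. Your route gives a short self-contained argument. It also makes clear that this direction has nothing to do with Coxeter groups: it holds for any finite direct product of groups with Property $\nl$.
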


\begin{proof}
    Suppose, by contradiction, an irreducible factor $W_\Gamma'$ does not have Property $\nl$. Then $W_\Gamma'$ has an element $\gamma$ that acts loxodromically for an action on some hyperbolic space $X.$ Define an action of $W_\Gamma$ on $X$ by extending the action $W_\Gamma'$ by the identity to the rest of the group (so all generators in the graph $\Gamma \backslash \Gamma'$ act by the identity). This action
    retains $\gamma$ as a loxodromic element, which means $W_\Gamma$ does not have Property $\nl$. 
    
    Conversely, it follows from \cite[Proposition 4.1]{PropNL} that Property $\nl$ is preserved under direct sums, which are isomorphic to direct products when finitely many factors are involved.
\end{proof}

Irreducible spherical Coxeter groups are straightforward to work with thanks to the following well-known result; see \cite{paris} for instance. Elements in a finite group cannot act loxodromically on any space, since each element has finite order.

\begin{lemma}\label{lemma:spherical}
    Spherical Coxeter groups are finite. In particular, they have Property $\nl$.
\end{lemma}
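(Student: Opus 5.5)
The lemma has two parts: that spherical Coxeter groups are finite, and that finite groups have Property $\nl$. I would treat them separately.

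\textbf{Finiteness.} By definition, an irreducible spherical Coxeter group $W_\Gamma$ acts geometrically on a sphere $S^{n-1}\subset\mathbb{R}^n$. A geometric action is properly discontinuous and cocompact. Since $S^{n-1}$ is compact, proper discontinuity alone forces the group to be finite. Indeed, fix a point $x\in S^{n-1}$ and take $K=S^{n-1}$. The set $\{g : gK\cap K\neq\emptyset\}$ is then all of $W_\Gamma$, and properness says this set is finite.

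Alternatively, one can quote the classical description. The spherical Coxeter groups are exactly those whose Tits bilinear form is positive definite. In that case the geometric representation realizes $W_\Gamma$ as a discrete subgroup of the compact group $O(n)$, and a discrete subgroup of a compact group is finite; see \cite{Humphreys} or \cite{paris}. For reducible spherical groups, that is, direct products of irreducible spherical factors, finiteness follows because a finite product of finite groups is finite.

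\textbf{Property $\nl$.} Let $G$ be a finite group acting by isometries on a hyperbolic space $X$, and let $g\in G$ have order $k$. For any $x\in X$, the orbit $\{g^m x : m\in\mathbb{Z}\}$ consists of at most $k$ points, so it is bounded. An element acting loxodromically must satisfy $d(x,g^m x)\geq c|m|$ for some $c>0$, so its orbits are unbounded. Hence $g$ is elliptic and not loxodromic. Since $g$, $X$ and the action were arbitrary, $G$ has Property $\nl$.

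There is no real obstacle here. The only point needing care is making sure the chosen definition of ``spherical'' actually yields finiteness. Where the definition is only implicit, I would rely on the positive-definite characterization of spherical type in \cite{Humphreys}.
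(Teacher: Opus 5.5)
Your proof is correct and follows the same approach as the paper. The paper cites finiteness of spherical Coxeter groups as well known, whereas you justify it, either directly from properness of the action on the compact sphere or via the positive-definite Tits form as in Humphreys. Both arguments then conclude the same way: an element of finite order has bounded orbits and so cannot act loxodromically.
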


We now consider irreducible Coxeter groups that are neither spherical nor affine. 

\begin{lemma}\label{lemma:notaffinesphericalthenacylindricallyhyp}
    Suppose $W_\Gamma$ is an irreducible Coxeter group of indefinite type. Then $W_\Gamma$ is acylindrically hyperbolic. In particular,  such groups fail to have Property $\nl$.
\end{lemma}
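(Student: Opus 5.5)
The plan is to reduce the lemma to a known theorem: an irreducible Coxeter group of indefinite type has a rank-one element acting on its Davis complex. We then upgrade this to acylindrical hyperbolicity using standard results on CAT(0) groups.

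\textbf{Step 1 (the action).} Let $\Sigma$ be the Davis complex of $W_\Gamma$. It is a proper CAT(0) space on which $W_\Gamma$ acts properly and cocompactly.

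\textbf{Step 2 (a rank-one element).} We invoke Caprace--Fujiwara (\emph{Rank-one isometries of buildings and quasi-morphisms of Kac--Moody groups}). Their result says that if $W_\Gamma$ is irreducible and neither spherical nor affine, then $W_\Gamma$ contains an element acting as a rank-one isometry of $\Sigma$. Such an element has an axis that bounds no half-flat. This is the main obstacle and the genuinely Coxeter-theoretic input. If one wanted to avoid citing it, the natural route is Krammer's and Paris's work on the centralizers and parabolic closures of essential elements. Any infinite-order element not contained in a proper parabolic subgroup or a finite-index-affine piece has virtually cyclic centralizer, and one would then argue that its axis cannot bound a flat half-plane. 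I would simply cite Caprace--Fujiwara.

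\textbf{Step 3 (acylindrical hyperbolicity).} Sisto shows that a rank-one isometry of a proper CAT(0) space, for a group acting properly, is a generalized loxodromic (Morse) element. Alternatively, by Bestvina--Fujiwara, rank-one elements of proper actions are WWPD. By Osin's characterization, a group that is not virtually cyclic and contains a generalized loxodromic element is acylindrically hyperbolic.

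We must check that $W_\Gamma$ is not virtually cyclic. Virtually cyclic infinite Coxeter groups are exactly $D_\infty$ up to finite factors, and $D_\infty$ is affine. Hence an irreducible Coxeter group of indefinite type is not virtually cyclic. This is also visible from the fact that such a group contains a free subgroup of rank two.

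\textbf{Step 4 (failure of Property $\nl$).} An acylindrically hyperbolic group admits, by definition, a non-elementary acylindrical action on a hyperbolic space. Any such action contains loxodromic elements. So some element of $W_\Gamma$ acts loxodromically, and $W_\Gamma$ fails Property $\nl$.
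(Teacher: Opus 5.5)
Your proposal is correct and follows essentially the same chain as the paper: Davis--Moussong for the proper cocompact CAT(0) action, Caprace--Fujiwara for a rank-one element, Sisto to upgrade it, and Osin's characterization to get acylindrical hyperbolicity and hence loxodromics. The differences are minor: you use the ``not virtually cyclic plus generalized loxodromic'' form of Osin's criterion where the paper uses ``proper infinite hyperbolically embedded subgroup,'' your explicit check that $W_\Gamma$ is not virtually cyclic supplies what the paper's word ``proper'' tacitly requires, and your Bestvina--Fujiwara aside should say WPD rather than WWPD, since WWPD with non-virtual-cyclicity does not give acylindrical hyperbolicity in general (e.g.\ $F_2\times\Z$ acting on the tree of $F_2$), though your main line through Sisto does not depend on it.
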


\begin{proof}
    Let $W_\Gamma$ be an irreducible Coxeter group that is neither spherical nor affine. By results of Davis \cite{davis98} and Moussong \cite{moussong}, such a group acts properly and cocompactly on a (proper) CAT(0) space (the Davis complex). By work of Caprace--Fujiwara \cite{CF}, this action has rank 1 elements. It follows by work of Sisto \cite[Theorems 1.1, 1.3]{sisto} that such groups have a proper, infinite, hyperbolically embedded subgroup. Applying the work of Dahmani-Guirardel-Osin \cite[Theorem 1.2]{Osin}, we conclude that $W_\Gamma$ is acylindrically hyperbolic.  

     Acylindrically hyperbolic groups by definition admit a non-elementary (acylindrical) action on a hyperbolic spaces, and therefore must contain (lots of) loxodromic elements. Thus they do not have Property $\nl$. 
\end{proof}

It remains to consider the case of irreducible affine Coxeter groups. We note that such Coxeter groups are completely classified, see for example \cite[Figure 1]{MollerVarghese}.

\begin{proposition}\label{prop:affinenl}
    Let $W_\Gamma$ be an irreducible affine Coxeter group. Then $W_\Gamma$ has Property $\nl$ if and only if it is not isomorphic to $D_\infty.$
\end{proposition}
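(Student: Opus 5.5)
Two directions. For $D_\infty = \Z/2 * \Z/2$ (the affine group $\tilde A_1$, graph with one edge labeled $\infty$), I will exhibit a loxodromic directly. $D_\infty$ acts on the real line $\mathbb{R}$, which is hyperbolic, with the two generators acting as the reflections $x\mapsto -x$ and $x\mapsto 2-x$. Their product is the translation $x\mapsto x+2$ (up to sign), which is loxodromic. Hence $D_\infty$ fails Property $\nl$.

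For the converse, let $W_\Gamma$ be irreducible affine with at least three generators (the only rank-two affine group is $D_\infty$). I will use the standard structure theorem for affine Coxeter groups (see \cite{Humphreys}): $W_\Gamma \cong \Z^n \rtimes W_0$, where $W_0$ is the finite Weyl group acting on the translation lattice $\Z^n$, and $n\geq 2$ is the rank minus one. Since $W_0$ is finite, every element $g$ has a power $g^k$ (with $k=|W_0|$) lying in $\Z^n$. Moreover, $g$ is loxodromic if and only if $g^k$ is. So it suffices to show that no nontrivial translation $t\in\Z^n$ acts loxodromically in any action of $W_\Gamma$ on a hyperbolic space.

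The key step, and the main obstacle, is this. Suppose $t$ is loxodromic, with fixed points $t^{\pm}$ on the Gromov boundary. Since $\Z^n$ is abelian, it preserves $\{t^+,t^-\}$. The $W_0$-orbit of $t$ is finite and spans a $W_0$-invariant sublattice. Because $W_0$ acts irreducibly on $\mathbb{R}^n$ (irreducibility of the affine group) and $n\ge 2$, this orbit contains an element $s=w t w^{-1}$ that is not a rational multiple of $t$. Then $s$ is also loxodromic, with fixed points $w t^{\pm}$, and $s$ commutes with $t$. Commuting loxodromics must share their fixed-point pairs, so $\{wt^\pm\}=\{t^\pm\}$.

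To finish, I will use a Busemann-type quasimorphism argument. Consider the subgroup $H$ of the abelian group $\Z^n$ fixing $t^+$ pointwise (index at most $2$). The Busemann quasi-character $\beta: H\to\mathbb{R}$ at $t^+$ is a genuine homomorphism on an abelian group and is nonzero on $t^2$. Now use the finitely many reflections in $W_0$ to derive a contradiction. The element $r=w_0$, or a suitable reflection in $W_0$, maps the lattice to itself and preserves $\{t^\pm\}$ after the above argument applied to all conjugates. Summing $\beta$ over the $W_0$-orbit gives a $W_0$-invariant linear functional on $\mathbb{R}^n$, possibly twisted by the sign coming from swapping $t^+$ and $t^-$. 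Irreducibility and nontriviality of $W_0$ force any such (sign-twisted) invariant functional to vanish. That contradicts $\beta(t^{2k'})\neq 0$ after accounting for the orbit sum.

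If that bookkeeping proves delicate, my fallback is to cite \cite{PropNL}, which shows that virtually abelian groups have $\nl$ exactly when they have no quotient to $D_\infty$ or $\Z$, i.e.\ finite abelianization-type conditions. I would then check that an irreducible affine $W_\Gamma$ of rank at least three has no surjection onto $\Z$ (it is generated by involutions) nor onto $D_\infty$ (the image of $\Z^n$ would be an infinite cyclic normal subgroup on which $W_0$ acts by $\pm1$, contradicting irreducibility with $n\ge2$).
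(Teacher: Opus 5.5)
Your proposal is correct, but it takes a different route from the paper. The paper feeds \cite[Corollary 1.5]{PropNL} two external inputs: virtual abelianness (hence amenability) from \cite{qithesis}, and just-infiniteness from \cite{MollerVarghese}, so that any surjection onto the infinite group $\Z$ or $D_\infty$ must be an isomorphism. Your fallback uses the same corollary, but you get both inputs from the single structural fact $W_\Gamma\cong\Z^n\rtimes W_0$ with $W_0$ irreducible on $\mathbb{R}^n$. Maps to $\Z$ kill all involutions. A surjection $\phi$ onto $D_\infty$ sends $\Z^n$ onto a finite-index, hence infinite, abelian subgroup, which must lie in the translation subgroup; so $\ker\phi\cap\Z^n$ spans a $W_0$-invariant hyperplane, which is impossible for $n\ge 2$. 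This is more self-contained and shows exactly why $n\ge 2$ separates these groups from $\tilde A_1=D_\infty$; the paper's version is shorter.

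Your main argument bypasses \cite{PropNL} altogether, reproving the needed case of its Corollary 1.5 by hand. It works once two points are tightened. First, an element commuting with $t$ fixes $t^{\pm}$ pointwise (as $xt^{+}=(xtx^{-1})^{+}$), so $H=\Z^n$. Since every conjugate of $t$ lies in $\Z^n$, all of $W_\Gamma$ preserves $\{t^+,t^-\}$, and the ``not a rational multiple'' step is unnecessary. Second, and more importantly, the orbit sum yields a contradiction only after you prove the twisted equivariance $\beta(wxw^{-1})=\epsilon(w)\beta(x)$ for $x\in\Z^n$, where $\epsilon(w)=-1$ exactly when $w$ swaps $t^{\pm}$. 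This follows from $\beta_{t^+}(wxw^{-1})=\beta_{w^{-1}t^+}(x)$ together with $\beta_{t^-}=-\beta_{t^+}$ on the pointwise stabilizer of $t^{\pm}$. Given that identity, no summing is needed: the kernel of the linear extension of $\beta|_{\Z^n}$ is a $W_0$-invariant hyperplane, contradicting irreducibility when $n\ge 2$. Note that the operative hypothesis is $n\ge2$, not nontriviality of $W_0$; for $D_\infty$ such a twisted functional does exist.
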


To prove the proposition, we will use the following result about Property $\nl$.

\begin{lemma}\cite[Corollary 1.5]{PropNL} \label{lemma:surjnl}
    Let $G$ be a finitely generated amenable group. Then $G$ has Property $\nl$ if and only if it does not surject onto $\Z$ nor $D_\infty.$
\end{lemma}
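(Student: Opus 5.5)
The proof splits into an easy direction and a harder one. For the easy direction, suppose $G$ surjects onto $\Z$ or onto $D_\infty$. Both groups act on the real line $\mathbb{R}$, which is a hyperbolic space: $\Z$ acts by translations and $D_\infty$ by translations and reflections. In both cases some element, namely a generator of $\Z$ or the product of the two generating involutions of $D_\infty$, acts as a nontrivial translation, hence loxodromically. Precomposing with the surjection gives an action of $G$ on $\mathbb{R}$ with a loxodromic element, so $G$ fails Property $\nl$.

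For the converse, suppose $G$ is finitely generated and amenable, and some $g\in G$ acts loxodromically on a hyperbolic space $X$. I will use Gromov's classification of group actions on hyperbolic spaces: bounded, horocyclic, lineal, focal, or general type. A general type action contains two independent loxodromics, and ping-pong then produces a free subgroup of rank two. This is impossible in an amenable group. Horocyclic and bounded actions have no loxodromics. So the action is either focal or lineal.

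\textbf{Focal case.} $G$ fixes a point $\xi\in\partial X$. The Busemann quasi-character at $\xi$ gives a homogeneous quasimorphism $\beta\colon G\to\mathbb{R}$. Since $G$ is amenable, its bounded cohomology vanishes, so $\beta$ is an honest homomorphism. Moreover $\beta(g)\neq 0$, because $g$ is loxodromic with $\xi$ as one of its fixed points. The image $\beta(G)$ is a finitely generated, nonzero, torsion-free abelian group, i.e.\ $\Z^k$ with $k\ge 1$. It therefore surjects onto $\Z$, and so does $G$.

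\textbf{Lineal case.} $G$ preserves the pair $\{g^{+},g^{-}\}$ of endpoints of $g$. Let $G_0$ be the subgroup of index at most $2$ fixing both points, and let $\varepsilon\colon G\to\{\pm1\}$ record whether the pair is swapped. On $G_0$ the Busemann quasimorphism at $g^{+}$ is again a homomorphism $q\colon G_0\to\mathbb{R}$ with $q(g)\neq0$, by amenability of $G_0$. It satisfies $q(hkh^{-1})=\varepsilon(h)q(k)$ for all $h\in G$ and $k\in G_0$, since swapping the endpoints reverses the translation direction.

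If $G_0=G$, we conclude as in the focal case.

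Otherwise, $A=q(G_0)\cong\Z^k$ and $G$ acts on $A$ through $\varepsilon$ by $\pm1$. Choose a linear functional $\varphi\colon A\to\Z$ with $\varphi(q(g))\neq0$; after rescaling its image it is surjective. The functional $\varphi$ is automatically equivariant, since the action is by scalars. Hence $K=\ker(\varphi\circ q)$ is normal in $G$. The quotient $G/K$ contains $G_0/K\cong\Z$ with index $2$, and elements outside $G_0$ act on it by inversion. Any such extension of $\Z/2$ by the sign module $\Z$ splits, because $H^2(\Z/2;\Z^{-})$ is $(\text{fixed points})/(\text{norms})=0$. So $G/K\cong\Z\rtimes\Z/2\cong D_\infty$, and $G$ surjects onto $D_\infty$.

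The main obstacle I expect is justifying the Busemann quasimorphism input cleanly: that it is homogeneous, that amenability upgrades it to a homomorphism, and that it satisfies the twisted equivariance in the lineal case. The rest is bookkeeping.
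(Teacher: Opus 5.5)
Your proof is correct. The paper gives no argument of its own here: it imports the statement from \cite[Corollary 1.5]{PropNL} and uses it as a black box in the proof of Proposition~\ref{prop:affinenl}, so there is no in-paper proof to compare against.

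Your reconstruction follows the standard route to this result. Amenability excludes general type actions. The Busemann quasimorphism on the stabilizer of the relevant endpoint(s), which has index at most $2$, is a genuine homomorphism, because homogeneous quasimorphisms on amenable groups are homomorphisms. Finite generation turns a nonzero homomorphism to $\mathbb{R}$ into a surjection onto $\Z$. The endpoint-swapping lineal case yields $D_\infty$ via $H^2(\Z/2;\Z^{-})=0$. You use finite generation exactly where it is needed: $\mathbb{Q}$ acting on $\mathbb{R}$ by translations is amenable, fails $\nl$, and surjects onto neither $\Z$ nor $D_\infty$.

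Two small points should be made explicit.
\begin{enumerate}
\item You need $G_0$ finitely generated to get $q(G_0)\cong\Z^k$. This holds because $G_0$ has finite index in $G$.
\item The twisted equivariance you flagged follows from two facts. The first is the naturality identity $\beta_{h\xi}(hkh^{-1})=\beta_\xi(k)$. The second is the identity $\beta_{g^-}(k)=-\beta_{g^+}(k)$ for all $k\in G_0$. This second identity holds because, for $k$ fixing $\xi$, $|\beta_\xi(k)|$ equals the stable translation length of $k$. So both sides vanish for non-loxodromic $k$. A loxodromic $k\in G_0$ moves towards one of the two endpoints and away from the other, so the two values have opposite signs.
\end{enumerate}
Combining the two facts with $h g^{+}=g^{-}$ for $h\notin G_0$ gives $q(hkh^{-1})=-q(k)$.
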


Thus, we must prove the following two claims:
\begin{enumerate}
    \item irreducible affine Coxeter groups $W_\Gamma$ are amenable ; and
    \item irreducible affine Coxeter groups do not surject onto $\Z$ nor $D_\infty.$
\end{enumerate}

For the first claim, we have the following. 
\begin{lemma}\label{lemma:affineamenable}
    Suppose $W_\Gamma$ is an irreducible affine Coxeter group. Then $W_\Gamma$ is amenable.
\end{lemma}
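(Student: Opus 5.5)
The plan is to show that $W_\Gamma$ is virtually abelian; amenability then follows at once. Abelian groups are amenable, and amenability passes to finite-index overgroups (equivalently, extensions of amenable groups by amenable groups, or here by a finite group).

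To get the virtually abelian structure, I would use the standard description of an irreducible affine Coxeter group as a discrete affine reflection group, as in \cite{Humphreys}. By definition, $W_\Gamma$ acts faithfully, properly and cocompactly on a Euclidean space $\mathbb{R}^n$ as a group generated by affine reflections. The linear parts of these reflections generate a finite reflection group $W_0 \subset O(n)$; this is the associated spherical Weyl group, which is finite by Lemma~\ref{lemma:spherical}. Taking linear parts gives a homomorphism $W_\Gamma \to W_0$. Its kernel consists of the elements of $W_\Gamma$ acting on $\mathbb{R}^n$ by pure translations. This kernel is a subgroup of the translation group $\mathbb{R}^n$, so it is abelian; by discreteness it is in fact a lattice $\Z^n$, the coroot lattice. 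Hence $W_\Gamma \cong \Z^n \rtimes W_0$, and in any case the kernel has finite index.

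I then conclude as follows. The translation subgroup is abelian, hence amenable, and its index in $W_\Gamma$ is at most $|W_0| < \infty$, so $W_\Gamma$ is amenable.

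The only real obstacle is justifying the affine representation with a finite linear part. I would cite \cite[Chapter 4 and Section 6.5]{Humphreys} rather than reprove it. Alternatively, I would invoke the Bieberbach-type fact that a discrete cocompact group of isometries of $\mathbb{R}^n$ has a finite-index translation subgroup. Either route reduces the lemma to standard facts, so no new argument is needed.
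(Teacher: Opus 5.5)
Your argument is correct. It has the same skeleton as the paper's proof: show $W_\Gamma$ is virtually abelian, hence amenable. However, you justify both steps differently.

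For virtual abelianness, the paper simply cites \cite[Corollary 1.6]{qithesis}. You instead extract it from the affine representation itself, via the linear-part map whose kernel is the translation subgroup. This makes the lemma self-contained modulo classical facts.

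One caveat: finiteness of $W_0$ does not follow from Lemma~\ref{lemma:spherical} alone. A subgroup of $O(n)$ generated by reflections need not be finite; for instance, two reflections of $\mathbb{R}^2$ whose mirrors meet at an angle $\theta\notin\pi\mathbb{Q}$ generate an infinite, non-discrete dihedral group. Finiteness comes from the discreteness of $W_\Gamma$, which is what your fallback citations supply. The cleanest fix is to quote Bieberbach's theorem directly: a discrete cocompact group of isometries of $\mathbb{R}^n$ has a finite-index translation subgroup $\cong\Z^n$.

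Your final deduction is in fact sounder than the paper's. You use a finite-index abelian subgroup together with the fact that amenability passes to finite-index overgroups. The paper instead claims that virtually abelian groups are solvable, which fails even for the groups at hand. The affine group $\tilde{A}_4$ (a $5$-cycle with all labels $3$) is $\Z^4\rtimes S_5$, which surjects onto $S_5$ and so is not solvable. What is true, and is all that is needed, is that virtually abelian groups are amenable.
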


\begin{proof}
    By \cite[Corollary 1.6]{qithesis}, an irreducible affine Coxeter group is virtually abelian. All virtually abelian groups are solvable, and therefore amenable.
\end{proof} 

For the second claim, we have the following. 

\begin{lemma}\label{lemma:nosurjection}
    Let $W_\Gamma$ be a finitely generated irreducible affine Coxeter group. Then \begin{enumerate} 
    \item there is no surjection of $W_\Gamma$ onto $\Z$; and  
    \item there is no surjection into $D_\infty$ unless $W_\Gamma \cong D_\infty$. 
    \end{enumerate}
\end{lemma}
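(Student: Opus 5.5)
Both parts come down to understanding the abelianization of $W_\Gamma$ and the images of the generators. Every generator of $W_\Gamma$ has order $2$, so any homomorphism $\varphi\colon W_\Gamma\to \Z$ sends each generator to an element of order dividing $2$ in $\Z$, hence to $0$. Since $W_\Gamma$ is generated by these involutions, $\varphi$ is trivial. This settles (1) with no use of the affine hypothesis beyond finite generation. (Equivalently, the abelianization is an elementary abelian $2$-group.)

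For (2), suppose $\varphi\colon W_\Gamma\to D_\infty=\langle x,y\mid x^2=y^2=1\rangle$ is surjective. Each generator $s$ maps either to $1$ or to an involution of $D_\infty$. The involutions of $D_\infty$ are exactly the reflections, i.e. conjugates of $x$ or $y$. Two distinct reflections $r,r'$ generate an infinite dihedral subgroup, and $rr'$ has infinite order. Now consider an edge of $\Gamma$ between $s$ and $t$ labelled by a finite $m$, or a non-edge, where $m=2$. Then $(\varphi(s)\varphi(t))^m=1$, so if both images are nontrivial they must be equal reflections. Commuting pairs similarly force equality or triviality, since distinct reflections never commute in $D_\infty$. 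So on the subgraph $\Gamma_f$ consisting of all vertices and all edges with finite labels (including the implicit label-$2$ relations between non-adjacent vertices), every pair of vertices with nontrivial image has the same image. Hence the image of $\varphi$ would be generated by the images of the vertices in each $\infty$-connected structure, and the question becomes: which pairs $s,t$ are linked only by $\infty$.

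Here the affine classification enters. In the list of irreducible affine Coxeter graphs (e.g. \cite[Figure 1]{MollerVarghese}), the only graph carrying an edge labelled $\infty$ is $\tilde A_1$, the two-vertex graph with label $\infty$, whose group is $D_\infty$. If $W_\Gamma\not\cong D_\infty$, then every pair of generators satisfies some relation $(st)^{m}=1$ with $m<\infty$, where $m=2$ for non-adjacent pairs. By the previous paragraph, all generators with nontrivial image then map to one common reflection $r$. So $\varphi(W_\Gamma)\subseteq\{1,r\}$, which has order at most $2$. This contradicts surjectivity onto the infinite group $D_\infty$.

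The main obstacle is justifying that irreducible affine Coxeter graphs other than $\tilde A_1$ have no $\infty$ labels. The plan is to cite the classification directly. As a backup, one could argue intrinsically: a standard parabolic subgroup $\langle s,t\rangle\cong D_\infty$ of an affine group with more than two generators would have to be a proper parabolic subgroup, and proper parabolic subgroups of irreducible affine groups are finite. The remaining step, that distinct reflections in $D_\infty$ have product of infinite order, is a routine computation in $D_\infty\cong\Z\rtimes\Z/2$.
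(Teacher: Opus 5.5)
Your proof is correct, but it takes a different route from the paper. The paper handles both parts at once by citing the theorem of M\"oller--Varghese that irreducible affine Coxeter groups are just infinite. A surjection onto an infinite group $K$ must then have trivial kernel, so $K\cong W_\Gamma$. This is impossible for $K=\Z$, since $\Z$ is not a Coxeter group, and it forces $W_\Gamma\cong D_\infty$ when $K=D_\infty$.

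Your argument avoids that theorem entirely. Part (1) is the observation that involutions must map trivially to the torsion-free group $\Z$. This holds for every Coxeter group; finite generation is not even needed. It is in effect the fact the paper uses implicitly when it says $\Z$ is not a Coxeter group.

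For part (2), you analyse the images of generators directly: distinct reflections of $D_\infty$ have product of infinite order, so a finite $m_{st}$ forces the nontrivial images of $s$ and $t$ to coincide. This proves more generally that no Coxeter group whose graph has no $\infty$-labels surjects onto $D_\infty$. The only affine input is that $\tilde A_1$ is the unique irreducible affine diagram with an $\infty$-label. You correctly obtain this either from the classification, which the paper also cites, or from the finiteness of proper standard parabolic subgroups of irreducible affine groups.

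The paper's route is shorter and gives the stronger conclusion that $W_\Gamma$ has no infinite proper quotients at all. Yours is self-contained and elementary. The sentence about the image being generated over each ``$\infty$-connected structure'' plays no role in your final argument and can be dropped.
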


\begin{proof}
   By \cite[Theorem 1.1]{MollerVarghese}, an irreducible affine Coxeter group $W_\Gamma$ is just infinite, meaning that the group is infinite and every proper quotient is finite. 

   Let $K = \Z$ or $D_\infty$. Suppose for contradiction that $\phi \colon W_\Gamma \to K$ is a surjective homomorphism. By the first isomorphism theorem, \[\dfrac{W_\Gamma}{ \ker(\phi)} \cong K.\]

   Since $W_\Gamma$ is just infinite and $K$ is infinite, it follows that $K$ is not a proper quotient. Thus either $K=\{1\}$ (a contradiction) or $K\cong W_\Gamma.$
   However, this is impossible if $K= \Z$ since $\Z$ is not a Coxeter group. In the case that $K = D_\infty$, it follows that $W_\Gamma \cong D_\infty$. 
\end{proof}

\begin{proof}[Proof of Proposition~\ref{prop:affinenl}]
    Let $W_\Gamma$ be an irreducible Coxeter group. Lemmas~\ref{lemma:affineamenable} and ~\ref{lemma:nosurjection} combine to show that $W_\Gamma$ is finitely generated, amenable and does not surject onto $\Z$ nor $D_\infty$ unless $W_\Gamma \cong D_\infty$. By Lemma~\ref{lemma:surjnl}, if $W_\Gamma \not\cong D_\infty$, then $W_\Gamma$ has Property $\nl$. Conversely, if $W_\Gamma$ has Property $\nl$, then $W_\Gamma \not\cong D_\infty$, since $D_\infty$ has an action on a line with translations, which are loxodromic elements. 
\end{proof}

\begin{proof}[Proof of Theorem~\ref{thm:main}]
    Let $W_\Gamma$ be a finitely generated Coxeter group with defining graph $\Gamma.$ By Lemma~\ref{lemma:irreducible}, we may focus on the  irreducible factor(s) of this group, and assume $W_\Gamma$ is irreducible. 

    Irreducible Coxeter groups are either spherical, affine, or neither. By Lemma~\ref{lemma:spherical}, all spherical irreducible Coxeter groups have Property $\nl$. By Lemma~\ref{lemma:notaffinesphericalthenacylindricallyhyp}, irreducible Coxeter groups that are neither spherical nor affine do not have Property $\nl$. By Proposition~\ref{prop:affinenl}, an irreducible affine Coxeter group satisfies Property $\nl$ if and only if $W_\Gamma \not\cong D_\infty$.  It follows then that $W_\Gamma$ must be spherical or affine, other than $D_\Gamma$. 

    The converse direction of the theorem follows from the above facts and Lemma~\ref{lemma:irreducible}. 
\end{proof}

\bibliographystyle{plainurl} 
\bibliography{bibliography}

\end{document}